\documentclass[11pt,reqno]{amsart}
\usepackage{latexsym}
\RequirePackage{amsmath}   \RequirePackage{amssymb}
\RequirePackage{amsthm}   \RequirePackage{epsfig}
\makeatletter
\@namedef{subjclassname@2020}{\textup{2020} Mathematics Subject Classification}
\makeatother

\theoremstyle{plain}
\newtheorem{theorem}{Theorem}

\newtheorem{lemma}{Lemma} %[theorem]
\newtheorem{proposition}{Proposition} %[theorem]
\newtheorem*{corollary*}{Corollary}

\newtheorem{conjecture}{Conjecture}
\newtheorem*{conjecture*}{Conjecture}

\theoremstyle{definition}

\theoremstyle{remark}
\newtheorem{remark}{Remark}

\newcommand{\SC}{{\mathbb C}}  \newcommand{\SD}{{\mathbb D}} 
 \newcommand {\SR}{{\mathbb R}}    

\newcommand{\al}{\alpha}    
  \newcommand{\ve}{\varepsilon}  
\def\t{\theta}    \newcommand{\la}{\lambda}
\newcommand{\si}{\sigma}  \newcommand{\vp}{\varphi}  \newcommand{\om}{\omega}
  \newcommand{\Om}{\Omega}

\newcommand{\be}{\begin{equation}}
\newcommand{\ee}{\end{equation}}
\newcommand{\bea}{\begin{eqnarray}}
\newcommand{\eea}{\end{eqnarray}}

\begin{document}

\title{Harmonic mappings, univalence criteria and a theorem of Lehtinen}  

\author[I. Efraimidis]{Iason Efraimidis}
\address{Departamento de Matem\'aticas, Universidad Aut\'onoma de Madrid, 28049 Madrid, Spain} \email{iason.efraimidis@uam.es}

\author[R.~Hern\'andez]{Rodrigo Hern\'andez}
\address{Facultad de Ingenier\'{\i}a y Ciencias, Universidad Adolfo Ib\'a\~nez, Av. Padre Hurtado 750, Vi\~na del Mar, Chile} \email{rodrigo.hernandez@uai.cl}

\subjclass[2020]{30C99, 30G30} 
\keywords{Harmonic mappings, Schwarzian derivative, univalence criteria}

\maketitle

\begin{abstract}
The harmonic inner radius $\sigma_H(\Omega)$ of a planar domain $\Omega$ is the largest constant with which a univalence criterion via the Schwarzian derivative holds for harmonic mappings. We show that $\sigma_H(\Omega)\leq\sigma_H(\mathbb{D})\leq 3/2$ for the unit disk $\mathbb{D}$ and for every domain $\Omega$ that omits an open set. This is an analogue of a theorem of Lehtinen in the setting of holomorphic functions. We provide two related univalence criteria for harmonic mappings.
\end{abstract}

\vskip.6cm

\begin{center}
\textit{Dedicated to Professor S.~Ponnusamy on the occasion of his 65th birthday.}
\end{center}

\vskip.8cm
%%%%%%%%%%%%%%%%%%%%%%%%%%%%%%%%%%
\noindent \textbf{1.~Introduction.} Let $\Om$ be a hyperbolic domain in $\overline{\SC}$, \emph{i.e.}, a domain that has at least three boundary points, and let $\pi:\SD\to\Om$ be a universal covering map, where $\SD$ is the unit disk. The density $\la_\Om$ of the hyperbolic (Poincar\'e) metric in $\Om$ is defined by 
$$
\la_\Om\big(\pi(z)\big) |\pi'(z)| \, = \, \la_\SD(z)\, = \,\frac{1}{1-|z|^2}, \qquad z\in \SD, 
$$
which is independent of the choice of the covering map $\pi$. For a locally univalent analytic function $f$ in $\Om$ the pre-Schwarzian and Schwarzian derivatives of $f$ are defined by 
$$
Pf = (\log f')' = \frac{f''}{f'} \qquad \text{and} \qquad Sf = (Pf)' -\tfrac{1}{2}(Pf)^2,
$$
respectively, while 
\be \label{def-Schw-norm}
\|Sf\|_\Om \, = \, \sup_{z\in\Om} \, \frac{ |Sf(z)| }{\la_\Om(z)^2} 
\ee
denotes its Schwarzian norm. The inner radius of $\Om$ is defined as the number 
$$
\si(\Om) \, = \, \sup \{\, c \geq0 \, : \, \, \|Sf\|_\Om \leq c \,\Rightarrow \, f \, \text{univalent} \}.   
$$ 
This domain constant is M\"obius invariant, \emph{i.e.}, $\si(T\Om)=\si(\Om)$ for every M\"obius transformation $T$. The classical results of Nehari and Hille show that $\si(\SD)=2$, while those of Ahlfors and Gehring show that for $\Om$ simply connected we have that $\si(\Om) > 0$ if and only if $\Om$ is a quasidisk, \emph{i.e.}, the image of $\SD$ under a quasiconformal self-map of $\overline{\SC}$. Lehtinen \cite{Lhi80} showed that every (hyperbolic) simply connected domain $\Om$ satisfies $\si(\Om) \leq 2$, with equality only in the case when $\Om$ is a disk or a half-plane. For finitely connected domains $\Om$ we have that $\si(\Om) > 0$ if and only if all boundary components are either points or quasicircles (see \cite{BG80, O80}), while uniform domains $\Om$ satisfy $\si(\Om) > 0$ (see \cite{MS79}). For more information on the inner radius we direct the reader to Lehto's book \cite[Ch.III, \S 5]{Leh}.

More generally, for a complex-valued harmonic mapping $f$ in a planar domain $\Om$ we write $\om=\overline{f_{\overline{z}}} / f_z$ for its dilatation. According to Lewy's theorem the mapping $f$ is locally univalent if and only if its Jacobian $J_f=|f_z|^2-|f_{\overline{z}}|^2$ does not vanish. Duren's book \cite{Du} contains valuable information about the theory of planar harmonic mappings. 

The pre-Schwarzian and Schwarzian derivatives of a locally univalent harmonic mapping $f$ were defined in \cite{HM15} as 
$$ 
P_f = (\log J_f)_z  \qquad \text{and} \qquad S_f \, = \, (P_f)_z - \tfrac{1}{2} (P_f)^2. 
$$  
If $f=h+\overline{g}$, for $h$ and $g$ holomorphic, then these formulas are equivalent to 
\be \label{pre-Sch-HM}
P_f \, = \, Ph - \frac{\overline{\om}\om'}{1-|\om|^2}
\ee
and
\be \label{Sch-HM}
S_f \, = \, Sh + \frac{\overline{\om}}{1-|\om|^2} \left( \frac{h''}{h'} \om'-\om'' \right) - \frac{3}{2} \left( \frac{\om' \overline{\om}}{1-|\om|^2} \right)^2. 
\ee
When $f$ is holomorphic these reduce to the classical pre-Schwarzian and Schwarzian derivatives. Observe that the notations $Pf$ and $Sf$ are used when we know that $f$ is holomorphic, while the notations $P_f$ and $S_f$, with $f$ as a subscript, are used in the more general context of harmonic mappings. The Schwarzian norm of a harmonic mapping is defined exactly as in \eqref{def-Schw-norm}.

Another definition for the Schwarzian derivative, introduced by Chuaqui, Duren and Osgood \cite{CDO03}, applies to harmonic mappings which admit a lift to a minimal surface via the Weierstrass-Enneper formulas. Focusing on the planar theory, in this article we adopt the definition from \cite{HM15} given above.

The \emph{harmonic inner radius} of a hyperbolic domain $\Om$ in $\overline{\SC}$ is defined as the constant 
$$
\si_H(\Om) \, = \, \sup \{ \, c \geq0 \, : \, f \; \text{harmonic in}\; \Om \; \text{with}  \; \|S_f\|_\Om\leq c \, \Rightarrow \, f \, \text{injective} \}. 
$$
Clearly, this is a M\"obius invariant constant and it satisfies $\si_H(\Om) \leq \si(\Om)$ since every holomorphic function is harmonic. In \cite{HM15-2} it was shown that $\si_H(\SD)>0$. For finitely connected domains $\Om$ we have that $\si_H(\Om)>0$ if and only if $\si(\Om)>0$; see \cite{Ef1}. Moreover, it was shown in \cite{Ef2} that $\si_H(\Om)>0$ for uniform domains $\Om$.

\vskip.5cm
%%%%%%%%%%%%%%%%%%%%%%%%%%%%%%%%%%
\noindent \textbf{2.~An analogue of Lehtinen's theorem.} Our main result in this section is the following. 

\begin{theorem} \label{thm-main}
For every domain $\Om$ in $\overline{\SC}$ that omits an open set we have that $$\si_H(\Om)\leq \si_H(\SD)\leq 3/2.$$ 
\end{theorem}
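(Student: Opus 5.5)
The plan is to prove the two inequalities separately: the first by a Lehtinen-type blow-up at a boundary point, the second by an explicit extremal mapping.

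\textbf{First inequality, $\si_H(\Om)\le\si_H(\SD)$.} Fix $c>\si_H(\SD)$. By definition there is a harmonic $f$ in $\SD$ with $\|S_f\|_\SD\le c$ and $f(z_1)=f(z_2)$ for some $z_1\ne z_2$ in $\SD$. Two facts will be used.
\begin{enumerate}
\item The definition $S_f=(P_f)_z-\frac12 P_f^2$ with $P_f=(\log J_f)_z$ gives the chain rule $S_{f\circ T}=(S_f\circ T)(T')^2$ for Möbius $T$, exactly as in the holomorphic case. Hence $|S_f|/\la^2$ is invariant under Möbius changes of variable.
\item If $G\subset\SD$ then $\la_G\ge\la_\SD$ on $G$, by the Schwarz--Pick lemma.
\end{enumerate}
Consequently, if $G\subset\SD$ is a domain containing $z_1,z_2$, the restriction $f|_G$ satisfies $\|S_f\|_G\le\|S_f\|_\SD\le c$ and is not injective.

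It therefore suffices to find Möbius images $\Om_n=T_n(\Om)$ with $\Om_n\subset\SD$ and $z_1,z_2\in\Om_n$ for large $n$. Then $f\circ T_n$ is a harmonic mapping on $\Om$ with Schwarzian norm at most $c$ that is not injective. This gives $\si_H(\Om)\le c$, and letting $c\downarrow\si_H(\SD)$ proves the first inequality.

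To build $T_n$, first find a point $p\in\partial\Om$ together with two disks tangent at $p$: an open disk $B_{in}\subset\Om$ and an open disk $B_{out}\subset\overline{\SC}\setminus\overline\Om$. To obtain them, pick a small radius $\rho$ such that a disk of radius $\rho$ lies in $\Om$ and another lies in the omitted open set. Slide a disk of radius $\rho$ continuously from the second position to the first. At the last moment the moving disk lies in the complement, a contact point $p\in\partial\Om$ appears. Near $p$, shrinking the radii if necessary gives both tangent disks.

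Next apply a Möbius map sending $\overline{\SC}\setminus \overline{B_{out}}$ onto $\SD$ and $p$ to $1$. This places $\Om$ inside $\SD$, with $B_{in}$ mapped to a disk internally tangent to $\partial\SD$ at $1$. Finally compose with hyperbolic automorphisms $A_n$ of $\SD$ whose attracting fixed point is $-1$ and repelling fixed point is $1$. The domains $A_n(\Om)\subset\SD$ contain $A_n(B_{in})$, and these internally tangent disks exhaust $\SD$. Hence $z_1,z_2\in A_n(\Om)$ for large $n$.

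The only delicate point here is the existence of the doubly tangent configuration at $p$. I expect this to be routine but fiddly; alternatively one can work with kernel convergence of the blown-up domains.

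\textbf{Second inequality, $\si_H(\SD)\le 3/2$.} Start from $f_0(z)=z+\overline{z^2/2}$, that is, $h(z)=z$ and $\om(z)=z$. Formula \eqref{Sch-HM} gives
$$
S_{f_0}=-\tfrac32\,\frac{\overline z^{\,2}}{(1-|z|^2)^2},
$$
so $\|S_{f_0}\|_\SD=3/2$. The image of $\partial\SD$ under $f_0$ is a deltoid with three cusps, so $f_0$ sits on the edge of failing injectivity. For each $\ve>0$ one then needs a harmonic $f_\ve$ in $\SD$ with $\|S_{f_\ve}\|_\SD\le 3/2+\ve$ that is not injective.

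I would try keeping $\om(z)=z$, or $\om(z)=z$ composed with a disk automorphism close to the identity, and perturbing $h$ to $h_\ve$ with $\|Sh_\ve\|$ and $\sup(1-|z|^2)|h_\ve''/h_\ve'|$ both $O(\ve)$. One choice is a Möbius map whose pole approaches a cusp direction; another is $h_\ve(z)=z+\ve z^2$, rescaled. With such $h_\ve$, the middle term of \eqref{Sch-HM} contributes at most $O(\ve)$ to the norm. Meanwhile the boundary curve $h_\ve+\overline{g_\ve}$ develops a small self-intersection (a swallowtail) near one cusp, so two points inside $\SD$ near that cusp have the same image.

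Alternatively, one could compose $f_0$ with a conformal map of $\SD$ onto a slightly larger domain $G\supset\SD$. Since $\SD\subset G$ implies $\la_G\le\la_\SD$, this transfers $f_0$ (defined in a neighbourhood of $\overline\SD$ by the same formula, with the Jacobian still nonvanishing near the disk) to a non-injective map with norm close to $3/2$. For this route one must show that the extension of $f_0$ beyond $\overline\SD$ fails to be injective right at a cusp and that the norm changes only by $o(1)$.

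This explicit construction, and verifying non-injectivity together with the norm estimate, is the step I expect to be the main obstacle.
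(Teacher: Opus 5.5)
\textbf{First inequality.} Your reduction is the same as the paper's: transplant a non-injective $f$ with $\|S_f\|_\SD\le c$ to a M\"obius image of $\Om$ inside $\SD$, using the M\"obius chain rule and $\la_G\ge\la_\SD$ for $G\subset\SD$. The step you call routine, however, is false. The sliding argument only gives a disk in the complement touching $\partial\Om$ at $p$. Nothing gives a disk in $\Om$ touching the same point, and shrinking radii does not help.

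For a counterexample, remove from $\overline{\SC}$ the closed disk $\overline{\SD}$ together with the radial segments $[e^{iq},(1+\frac1m)e^{iq}]$, $m\ge1$, where for each $m$ the angles $q$ form a finite grid of mesh at most $1/m^2$. The removed set is closed, its complement $\Om$ is a domain, and the interior of $\overline{\SC}\setminus\Om$ is $\SD$. So every exterior disk touches $\partial\Om$ only on $\partial\SD$. Now suppose a disk, half-plane or disk complement in $\Om$ has $e^{i\t}$ on its boundary. Then, for some $C,\delta>0$, it contains all $re^{i\phi}$ with $|\phi-\t|<\delta$ and $1+C(\phi-\t)^2<r<1+\delta$. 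These points are hit by the segments of stage $m$ for large $m$. In this example the blow-ups at $p$ do not converge to $\SD$ either, so the kernel-convergence fallback also fails.

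What you need is two points, not a disk. Once $\Om\subset\SD$ with $1\in\partial\Om$ and, after an automorphism fixing $1$, $0\in\Om$, normalize $z_1=0$ and $z_2=x\in(0,1)$. Pick $z_0\in\Om$ with $|z_0|>x$ and apply $z\mapsto e^{-i\arg z_0}(x/|z_0|)\,z$. This maps $\SD$ into $\SD$ and sends $0,z_0$ to $0,x$; it is exactly the paper's argument.

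\textbf{Second inequality.} Your reduction is right, and your $f_0=z+\overline{z^2/2}$ is precisely the paper's $f_1$, with $\|S_{f_1}\|=3/2$. But the non-injective family, which is the whole content of this half, is missing, and none of your candidates supplies it.
\begin{itemize}
\item For $h_\ve=z+\ve z^2$ with small real $\ve$, the shear $\vp_\ve=h_\ve-g_\ve$ has $\vp_\ve'=(1+2\ve z)(1-z)$. One checks on $\partial\SD$ that $\mathrm{Re}\{(1-2\cos\nu\,z+z^2)\vp_\ve'(z)\}\ge0$ for a suitable $\nu$ near $2\pi/3$. So $\vp_\ve$ is univalent and convex in the horizontal direction, and $f_\ve$ is injective by the Clunie--Sheil-Small shear theorem.
\item A M\"obius $h$ whose pole tends to $\partial\SD$ makes $\sup(1-|z|^2)|h''/h'|\to4$. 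The middle term of \eqref{Sch-HM} is then not small, and the norm does not approach $3/2$.
\item The extension route is impossible, because $J_{f_0}=1-|z|^2$ vanishes on $\partial\SD$. Indeed $J_f=|h'|^2(1-|z|^2)$ whenever $\om(z)=z$.
\end{itemize}

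The missing idea is to let $h$ degenerate exactly at the boundary point that $f_1$ sends to a cusp, while keeping $\|Sh\|$ and $\sup(1-|z|^2)|Ph|$ of order $\al-1$. The paper uses $h_\al=(1-(1-z)^\al)/\al$ with $\om=z$, which is non-injective for $\al>1$ by \cite{WLRS18}. Here $h_\al'=(1-z)^{\al-1}$ vanishes at $z=1$. The formulas $Ph_\al=(1-\al)/(1-z)$ and $Sh_\al=(1-\al^2)/(2(1-z)^2)$, together with $1-|z|^2\le2|1-z|$, give $\|S_{f_\al}\|\le2(\al^2-1)+2(\al-1)+\frac32\to\frac32$.
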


%%%%%%%%%%%%%%%%%
\begin{proof}
We first prove the inequality $\si_H(\Om)\leq \si_H(\SD)$. Assume, in order to get a contradiction, that $\si_H(\Om) > \si_H(\SD)$. Then there exists a non-injective harmonic mapping $f$ in $\SD$ for which 
$$
\si_H(\SD) < \|S_f\|_\SD < \si_H(\Om). 
$$
Let $z_1,z_2\in\SD$ be distinct points for which $f(z_1)=f(z_2)$. Let $\vp$ be a disk automorphism for which $\vp(z_1)=0$ and $\vp(z_2)=x \in(0,1)$. % This will be used later. 

Since $\Om$ omits an open set there exists a circle that lies in its complement and touches its boundary. Applying an inversion with respect to this circle we may assume that $\Om\subset\SD$ and that $1\in\partial \Om$ without changing its harmonic inner radius. Moreover, we may assume that $0\in\Om$ since, otherwise, we may apply the transformation 
$$
z\mapsto \frac{1-\overline{p}}{1-p} \, \frac{z-p}{1-\overline{p}z}
$$
for any point $p\in\Om$. Let $\ve = \frac{1-x}{2}$ and consider some point $z_0 = r_0 e^{i\t_0} \in\Om$ for which $|1-z_0|<\ve$. Since $1-r_0\leq|1-z_0|<\ve$ it is evident that 
$$
r_0 > 1-\ve = \frac{1+x}{2} > x.
$$
Hence, the function $\psi(z) = e^{-i\t_0}\frac{x}{r_0} z$ maps $\SD$ into $\SD$, fixes the origin and sends $z_0$ to $x$. Therefore, the domain 
$$
\widehat{\Om} = \vp^{-1} \circ \psi (\Om)
$$
contains the points $z_1$ and $z_2$, so that $f$ restricted on $\widehat{\Om}$ is non-injective. However, by the domain monotonicity of the hyperbolic metric we have that
$$
\|S_f\|_{\widehat{\Om}} \leq \|S_f\|_\SD < \si_H(\Om) = \si_H(\widehat{\Om}), 
$$
which implies that $f$ is injective in $\widehat{\Om}$, a contradiction. 

\vskip.1cm
Next, we prove the inequality $\si_H(\SD)\leq 3/2$. In \cite{WLRS18} it was shown that the harmonic mapping $f_\al=h_\al+\overline{g_\al}$, defined by 
$$
h_\al(z)=\frac{1-(1-z)^\al}{\al} \qquad \text{and} \qquad g_\al(z) = \frac{1-(1+\al z)(1-z)^\al}{\al(1+\al)}, \qquad z\in \SD, 
$$
is not injective for $\al>1$. The dilatation of $f_\al$ is $\om_\al(z)=z$. We compute $h_\al'(z)=(1-z)^{\al-1}$, so that 
$$
Ph_\al(z) =\frac{1-\al}{1-z} \qquad \text{and} \qquad Sh_\al(z)=\frac{1-\al^2}{2(1-z)^2}.
$$ 
Hence, 
$$
S_{f_\al}(z) = \frac{1-\al^2}{2(1-z)^2} + \frac{(1-\al)\overline{z}}{(1-|z|^2)(1-z)} - \frac{3\overline{z}^2}{2(1-|z|^2)^2}. 
$$
It is easy to see that
$$
\|S_{f_\al}\|_\SD \leq 2(\al^2-1) + 2(\al-1) + \frac{3}{2}, 
$$
which decreases to $3/2$ when $\al\to1^+$. Now, if $\si_H(\SD)>3/2$ were true, then there would exist $\al_0>1$ for which the mapping $f_{\al_0}$ would satisfy 
$$
\|S_{f_{\al_0}}\|_\SD \leq 2(\al_0^2-1) + 2(\al_0-1) + \frac{3}{2} = \si_H(\SD).  
$$
But this would imply that $f_{\al_0}$ is injective, a contradiction. 
\end{proof}
%%%%%%%%%%%%%%%%%

A few remarks are in order. 

\begin{remark}
The method of proof of the first inequality in Theorem~\ref{thm-main} can be applied to the holomorphic setting and give a new proof of Lehtinen's inequality $\si(\Om)\leq 2$ that does not rely on any specific function; recall that Lehtinen's proof makes use of the logarithm. On the other hand, an important drawback of our proof is that it fails to characterize the extremal functions, which is the essence of Lehtinen's result. 
\end{remark}

\begin{remark} 
If the domain $\Om$ is simply connected and does not omit an open set then it is not a Jordan domain, hence $\si(\Om)=0$ by a theorem of Gehring \cite{Ge77} and therefore $\si_H(\Om)=0$, so that our inequality holds for \emph{all} simply connected domains. 

The situation is different if $\Om$ is finitely connected and does not omit an open set. In this case, if $\partial \Om$ has some component that is different than a point then $\si(\Om)=0$ by \cite{BG80}. If, on the other hand, $\Om$ is the sphere $\overline{\SC}$ with a finite number of punctures then it is not hard to see that filling in the punctures does not decrease the inner radius, so that $0<\si(\Om) \leq \si(\SC_{0,1})$, where $\SC_{0,1}=\SC\backslash\{0,1\}$ is the largest hyperbolic domain. It seems to be an open problem to estimate the inner radius of $\SC_{0,1}$ or, at least, to show that it satisfies Lehtinen's inequality. 
\end{remark}

\begin{figure}[h]
    \centering
    \begin{minipage}{0.49\textwidth}
        \centering
        \includegraphics[width=0.97\textwidth]{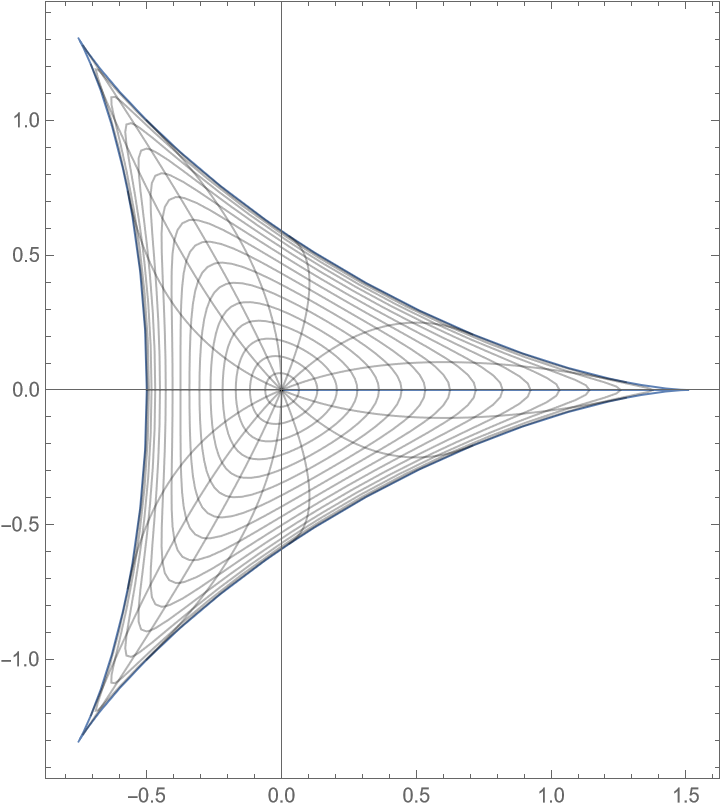}
        \caption{The range of $f_1$} \label{fig1}
    \end{minipage}\hfill
    \begin{minipage}{0.49\textwidth}
        \centering
        \includegraphics[width=0.97\textwidth]{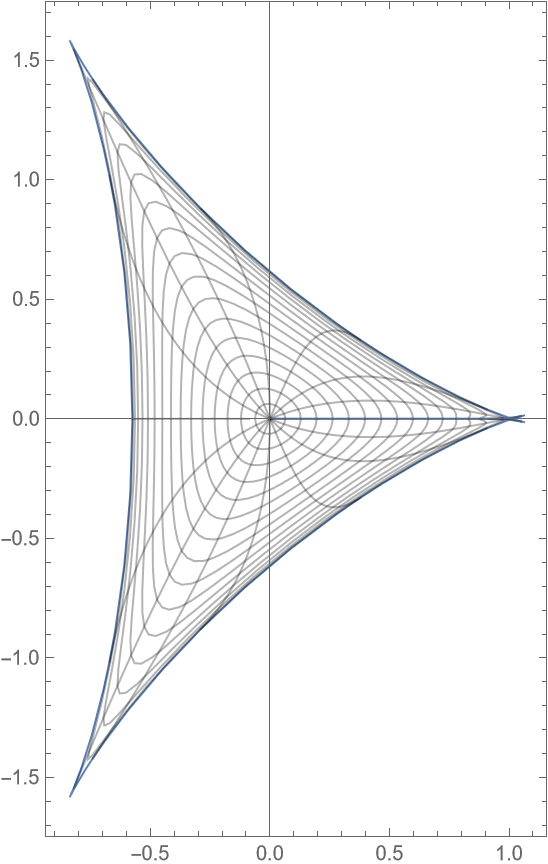} 
        \caption{The range of $f_{1.5}$}\label{fig2}
    \end{minipage}
\end{figure}

\vskip.5cm
%%%%%%%%%%%%%%%%%%%%%%%%%%%%%%%%%%
\noindent \textbf{3.~Two conjectures.} We turn our attention to the unit disk. Hereafter, we simplify the notation for the Schwarzian norm and simply write $\|\cdot\|$ instead of $\|\cdot\|_\SD$. 

Motivated by the harmonic mapping $f_\al$, constructed in \cite{WLRS18} and used in the proof of the second inequality in Theorem~\ref{thm-main}, we state the following conjecture. Note that for $\al =1$, the injective mapping $f_1$ is the horizontal shear of the function $z\mapsto z-z^2/2$ with dilatation $\om(z)=z$; its range $f_1(\SD)$ is shown in Figure~\ref{fig1}. 

\begin{conjecture} \label{conj-3/2}
$\si_H(\SD) = 3/2$ 
\end{conjecture}

According to a theorem of Gehring and Pommerenke \cite{GePo}, if $f$ is holomorphic with $\|Sf\| \leq 2$ then we have that $f(\SD)$ is a Jordan domain and $f$ admits a homeomorphic extension to $\overline{\SC}$, unless $f$ is M\"obius conjugate to the logarithm function $\ell(z)=\log\frac{1+z}{1-z}$, that is, $f=T\circ\ell\circ\tau$ for M\"obius transformations $T$ and $\tau$ with $\tau(\SD)=\SD$. In short, at the threshold $\|Sf\| = 2$ the image is not, in general, a Jordan domain, so that univalence is lost above that threshold. 

The proposed extremal harmonic mapping $f_1$ for Conjecture~\ref{conj-3/2} suggests an interesting distinction between the holomorphic and the harmonic settings: above the threshold $\|S_f\| = 3/2$ injectivity should, in general, be lost because a cusp in the image collapses. Indeed, we see this in the range of $f_{1.5}$ in Figure~\ref{fig2}. One may then expect, in contrast to the holomorphic setting, to still have a Jordan domain at the threshold for harmonic mappings, without the need to exclude any specific mappings from consideration. We are thus led to the following statement, which is stronger than Conjecture~\ref{conj-3/2}. 

\begin{conjecture}
If $f$ is a harmonic mapping in $\SD$ with $\|S_f\|\leq 3/2$ then $f$ admits a homeomorphic extension to $\overline{\SC}$. 
\end{conjecture}

In \cite{Ef1} it was shown that there exists a constant $c>0$ for which the condition $\|S_f\|\leq c$ implies the existence of a homeomorphic extension of $f$ to $\overline{\SC}$.

\vskip.5cm
%%%%%%%%%%%%%%%%%%%%%%%%%%%%%%%%%%
\noindent \textbf{4.~Univalence criteria.} The fact that $\si_H(\SD)>0$, namely, that there exists some $c>0$ such that $\|S_f\| \leq c $ implies that $f$ is injective, raises the pressing question of finding an explicit positive lower bound for $\si_H(\SD)$. Such a bound would be an interesting first step towards solving Conjecture~\ref{conj-3/2}. In what follows, we provide a criterion of a different nature that, still, involves the Schwarzian derivative and, moreover, provides explicit constants. This makes use of the hyperbolic derivative of holomorphic self-maps $\om$ of $\SD$, given by
$$
\om^*(z) = \frac{(1-|z|^2)\om'(z)}{1-|\om(z)|^2}, \qquad z\in\SD,
$$
and the harmonic order operator given by 
\be \label{h-order}
A_f(z) = \tfrac{1}{2} (1-|z|^2) P_f(z) - \overline{z}.  
\ee
It is known that $A_f$ appears naturally as the second coefficient of the analytic part of the harmonic mapping that arises from $f$ after an application of a Koebe transform followed by a certain affine transform; see Section 3 in \cite{AHS22}. 

We will be needing the following lemma. 

\begin{lemma}\label{lem-om}
Let $\om$ be a holomorphic self-mapping of $\SD$. Then 
$$
\left| \frac{(1-|z|^2)^2\om''(z)}{ 2(1-|\om(z)|^2) } - \overline{z} \om^*(z) + \overline{\om(z)} \om^*(z)^2 \right| \leq 1- |\om^*(z) |^2, \qquad z\in\SD.  
$$
\end{lemma}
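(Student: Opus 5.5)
Our plan is to reduce the inequality to the Schwarz--Pick lemma applied to a suitably normalized self-map, and to use Möbius invariance to move the general point to the origin. Fix $z\in\SD$ and put
$$
F(\zeta) \, = \, \sigma_{\om(z)}\circ\om\circ\tau_z(\zeta), \qquad \tau_z(\zeta)=\frac{\zeta+z}{1+\overline{z}\zeta}, \qquad \sigma_a(w)=\frac{w-a}{1-\overline{a}w}.
$$
Then $F$ is a holomorphic self-map of $\SD$ with $F(0)=0$. Its Taylor expansion is $F(\zeta)=b_1\zeta+b_2\zeta^2+\cdots$, and the generalized Schwarz lemma gives the classical coefficient bound $|b_2|\leq 1-|b_1|^2$. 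This follows by applying Schwarz--Pick to $F(\zeta)/\zeta$, which is a self-map of $\overline{\SD}$; if it is a unimodular constant, then $b_2=0$ and the bound is trivial. The proof then amounts to showing that $b_1=\om^*(z)$ and that $b_2$ equals the expression inside the absolute value, possibly up to a unimodular factor.

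For the first coefficient, differentiate with the chain rule at $\zeta=0$. Since $\tau_z'(0)=1-|z|^2$ and $\sigma_a'(a)=1/(1-|a|^2)$, we get
$$
b_1=F'(0)=\frac{(1-|z|^2)\om'(z)}{1-|\om(z)|^2}=\om^*(z).
$$

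For the second coefficient, we expand each factor to second order. With $a=\om(z)$ we have
$$
\sigma_a(a+u)=\frac{u}{1-|a|^2}+\frac{\overline{a}\,u^2}{(1-|a|^2)^2}+O(u^3),
$$
$$
\tau_z(\zeta)-z=(1-|z|^2)\zeta-\overline{z}(1-|z|^2)\zeta^2+O(\zeta^3).
$$
Taylor expanding $\om$ around $z$ and substituting $u=\om(\tau_z(\zeta))-a$ then gives
$$
u=\om'(z)(1-|z|^2)\zeta+\Bigl[-\overline{z}(1-|z|^2)\om'(z)+\tfrac12\om''(z)(1-|z|^2)^2\Bigr]\zeta^2+O(\zeta^3).
$$
Collecting the $\zeta^2$ terms in $\sigma_a(u)$, we obtain
$$
b_2=\frac{(1-|z|^2)^2\om''(z)}{2(1-|\om(z)|^2)}-\overline{z}\,\om^*(z)+\overline{\om(z)}\,\om^*(z)^2,
$$
which is exactly the expression in the lemma. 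The inequality $|b_2|\leq 1-|b_1|^2$ is therefore the claim.

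The only real obstacle is the bookkeeping in the second-order expansion. The signs and conjugates have to come out precisely as written: the $-\overline{z}$ term comes from $\tau_z''(0)$, and the $+\overline{a}$ term comes from $\sigma_a''(a)$. We would verify these two pieces separately before combining them.
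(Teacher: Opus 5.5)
Your proof is correct and follows the same approach as the paper: your map $\sigma_{\om(z)}\circ\om\circ\tau_z$ is exactly the paper's $\vp_{-\om(a)}\circ\om\circ\vp_a$ with $a=z$, and the paper likewise concludes from the bound $|a_2|\leq 1-|a_1|^2$. Your second-order expansion correctly yields $b_1=\om^*(z)$ and $b_2$ equal to the expression in the lemma, which the paper leaves unwritten.
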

\begin{proof}
With the notation $\vp_a(z)= \frac{a+z}{1+\overline{a}z}, a\in\SD$, for the automorphisms of $\SD$, the lemma follows directly from considering the self-mapping
$$
\vp_{-\om(a)} \circ \om \circ \vp_a(z) = \frac{ \om\!\left(\frac{a+z}{1+\overline{a}z}\right) - \om(a)}{ 1- \overline{\om(a)} \, \om\!\left(\frac{a+z}{1+\overline{a}z}\right)} = a_1 z + a_2 z^2 +\ldots 
$$
and applying to it the Schwarz-Pick inequality $|a_2|\leq 1-|a_1|^2$. 
\end{proof}

\begin{theorem}\label{thm-h-order}
Let $f$ be a locally univalent harmonic mapping in $\SD$ with dilatation $\om:\SD\to\SD$. If
$$
(1-|z|^2)^2 |S_f(z)| +2| \om^*(z) A_f(z)| \leq \frac{1}{2} | \om^*(z)|^2, \qquad z\in\SD,
$$
then $f$ is injective.
\end{theorem}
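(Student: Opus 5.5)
The plan is to reduce the statement to Nehari's criterion $\|SF\|\le 2$, applied to the holomorphic functions $F_\la = h+\la g$ with $|\la|=1$, where $f=h+\overline{g}$.

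\textbf{Step 1 (reduction).} Suppose $f(z_1)=f(z_2)$ with $z_1\neq z_2$. Then $h(z_1)-h(z_2) = -\overline{(g(z_1)-g(z_2))}$, so the two differences have equal modulus.
\begin{itemize}
\item If $g(z_1)\ne g(z_2)$, set $\la = -\frac{h(z_1)-h(z_2)}{g(z_1)-g(z_2)}$. Then $|\la|=1$ and $F_\la(z_1)=F_\la(z_2)$.
\item If $g(z_1)=g(z_2)$, then also $h(z_1)=h(z_2)$, and every $F_\la$ fails to be injective.
\end{itemize}
Each $F_\la$ is locally univalent, since $F_\la' = h'(1+\la\om)$ and $|\om|<1$. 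Hence it suffices to show $(1-|z|^2)^2|SF_\la(z)|\le 2$ for all $z\in\SD$ and all $|\la|=1$. Nehari's theorem then gives injectivity of every $F_\la$, a contradiction.

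\textbf{Step 2 (formula for $SF_\la$).} From $PF_\la = Ph + \frac{\la\om'}{1+\la\om}$ we get
$$
SF_\la = Sh + \frac{\la\om''}{1+\la\om} - \frac{\la\om'}{1+\la\om}\,Ph - \frac{3}{2}\Big(\frac{\la\om'}{1+\la\om}\Big)^2 .
$$
Next eliminate $Sh$ and $Ph$ using \eqref{pre-Sch-HM} and \eqref{Sch-HM}, writing $Ph = P_f + \frac{\overline{\om}\om'}{1-|\om|^2}$. The goal is to write $(1-|z|^2)^2 SF_\la$ as
\begin{itemize}
\item $(1-|z|^2)^2 S_f$,
\item plus a term linear in $\om^*$ multiplied by $(1-|z|^2)P_f$, which should combine into $-2\om^* A_f$ times a unimodular-type factor $\frac{\la(1-|\om|^2)}{1+\la\om}$ (note that $A_f$ carries the $-\overline{z}$),
\item plus a factor $\frac{\la(1-|\om|^2)}{1+\la\om}$ multiplied by exactly the expression $E = \frac{(1-|z|^2)^2\om''}{2(1-|\om|^2)} - \overline{z}\om^* + \overline{\om}\om^{*2}$ from Lemma~\ref{lem-om},
\item plus a remaining quadratic term in $\om^*$.
\end{itemize}
The $\overline{z}$ needed to complete $E$ is exactly the one supplied by the definition \eqref{h-order} of $A_f$.

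\textbf{Step 3 (estimates).} Set $u = \frac{\la(1-|\om|^2)}{1+\la\om}$. Since $|1+\la\om|\ge 1-|\om|$, we have $|u|\le 1+|\om|< 2$. Lemma~\ref{lem-om} gives $|E|\le 1-|\om^*|^2$. The hypothesis bounds $(1-|z|^2)^2|S_f|+2|\om^*A_f|$ by $\tfrac12|\om^*|^2$.

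The main obstacle is the bookkeeping of the quadratic terms in $\om^*$ (those with $\frac32$ and $u^2$). These must combine with $2(1-|\om^*|^2)$ from the lemma term and the $\tfrac12|\om^*|^2$ budget to give a total of at most $2$. I would first try to write the quadratic remainder as a perfect expression of the form $c\,(u\,\om^*)^2$ plus terms absorbed into $E$, and bound it by $\tfrac32|\om^*|^2$. One also has to check whether the factor multiplying $A_f$ is at most $1$ rather than $2$ in modulus. If it is not, I would adjust the grouping, for example by pulling $\overline{\om}$ terms into $E$ via $\frac{1}{1+\la\om} = 1 - \frac{\la\om}{1+\la\om}$, so that the constants close up exactly to $2$.
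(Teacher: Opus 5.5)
\textbf{The gap: the decomposition in Steps 2--3 is never carried out, and the factor you name is wrong.}
Your Step 1 reduction to Nehari's criterion for $F_\la=h+\la g$, $|\la|=1$, is sound, and the target bound $(1-|z|^2)^2|SF_\la(z)|\le 2$ is in fact true. The problem is that Steps 2--3 never prove it, and the one concrete structural claim you make is incorrect.

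The factor $u=\frac{\la(1-|\om|^2)}{1+\la\om}$ is not unimodular: $|u|$ ranges over $[1-|\om|,\,1+|\om|]$. If $u$, with only the control $|u|<2$, were the coefficient of $\om^*A_f$, of $E$ and of the quadratic term, the constants could not add up to $2$.

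What you overlooked is this. Eliminating $Sh$ and $Ph$ via \eqref{Sch-HM} and \eqref{pre-Sch-HM} produces $\overline{\om}$-terms of exactly the same three shapes; these are the terms of the identity for $(1-|z|^2)^2Sh$, i.e.\ your computation at $\la=0$. They merge with your $u$-terms into the single factor $\eta$:
\[
\eta \,=\, u+\overline{\om} \,=\, \frac{\la+\overline{\om}}{1+\la\om}, \qquad (1-|z|^2)^2 SF_\la \,=\, (1-|z|^2)^2 S_f - 2\eta\,\om^*A_f + 2\eta\,E - \tfrac{3}{2}\,(\eta\,\om^*)^2 .
\]
For $|\la|=1$ one has $|\la+\overline{\om}|=|1+\la\om|$, so $|\eta|=1$. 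The hypothesis and Lemma~\ref{lem-om} then give the bound $\tfrac12|\om^*|^2+2(1-|\om^*|^2)+\tfrac32|\om^*|^2=2$.

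Your fallback of ``pulling $\overline{\om}$ terms into $E$'' does not find this. The unimodularity of $\eta$ is the crux of the argument, and it is what the proposal is missing.

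\textbf{Comparison with the paper's route.}
Once repaired, your route is a genuine alternative. The paper proves only the case $\la=0$ (factor $\overline{\om}$), which gives $\|Sh\|\le 2$. It then obtains $h+ag$, $a\in\SD$, as the analytic part of $F=f+a\overline{f}$, for which $S_f$, $A_f$ and $|\om^*|$ are unchanged. It passes to $|a|=1$ by Hurwitz' theorem and finishes with the same rotation argument as your Step 1.

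Indeed $\overline{\om_F}\,\om_F^*=\frac{a+\overline{\om}}{1+a\om}\,\om^*$, which is why your identity matches the paper's one transported by this affine invariance. The paper trades heavier algebra for an invariance argument plus a limiting step. Your version handles all $|\la|=1$ at once and needs neither Hurwitz nor the non-constancy check.
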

\begin{proof}
Writing $f=h+\overline{g}$, we will prove that $\|Sh\|\leq 2$, from which the univalence of $h$ will follow. Thereafter, we observe that the hypothesis is invariant under affine transformations $F=f+a\overline{f}, a\in\SD$, since $P_F=P_f$ and $S_F=S_f$ (see \cite[Prop.~1]{HM15}), so that $A_F=A_f$ and, moreover, the dilatation $\om_F$ of $F$ satisfies
$$
\om_F^* = \frac{1+\overline{a \om}}{1+a\om}\, \om^*, 
$$
therefore $|\om_F^*|=|\om^*|$. It follows that $h+ag$, \emph{i.e.}, the analytic part of $F$, is univalent for all $a\in\SD$. Letting $|a|\to1^-$ and using Hurwitz' theorem we get that $h+ag$  is univalent for all $a\in\partial\SD$; the fact that $|\om|<1$ guarantees that this limit function is not constant. A standard rotational argument concludes the proof: Let $z_1, z_2$ be distinct points in $\SD$ for which $f(z_1)=f(z_2)$. Since $h$ is injective, we have that $h(z_1)\neq h(z_2)$. Setting $\t=\arg\big(  h(z_1)- h(z_2) \big)$ we see that 
$$
\SR \ni e^{-i\t} \big(h(z_1)- h(z_2)\big) =  e^{-i\t} \big(\overline{g(z_2)}- \overline{g(z_1)}\big) = e^{i\t} \big(g(z_2)- g(z_1)\big), 
$$ 
from which we get that $h+e^{2i\t}g$ is not injective, a contradiction. 

To show that $\|Sh\|\leq 2$, we rewrite \eqref{Sch-HM}, suppressing the variable $z$ where it is possible, as follows
\begin{align*}
Sh  = & \, S_f - \frac{ \overline{\om} \om'}{1-|\om|^2} \left( Ph -\frac{ \overline{\om} \om'}{1-|\om|^2}  - \frac{ 2 \overline{z}}{ 1-|z|^2} \right) \\ 
&+ \frac{ \overline{\om} }{1-|\om|^2}\left[  \om'' - \frac{ 2 \overline{z} \om'}{1-|z|^2} +  \frac{2 \overline{\om} (\om')^2}{1-|\om|^2} \right] - \tfrac{3}{2} \left( \frac{ \overline{\om} \om'}{1-|\om|^2} \right)^2. 
\end{align*}
Multiplying both sides by $(1-|z|^2)^2$, and in view of \eqref{pre-Sch-HM} and \eqref{h-order}, we obtain
\begin{align*}
(1-|z|^2)^2 Sh(z) = & \, (1-|z|^2)^2 S_f(z)  - 2\overline{\om(z)} \om^*(z) A_f(z) - \tfrac{3}{2} \left[ \overline{\om(z)} \om^*(z) \right]^2 \\
&+ 2 \overline{\om(z)} \left[ \frac{(1-|z|^2)^2\om''(z)}{ 2(1-|\om(z)|^2) } - \overline{z} \om^*(z) + \overline{\om(z)} \om^*(z)^2 \right]. 
\end{align*}
Finally, using the hypothesis and Lemma~\ref{lem-om} we get that 
\begin{align*}
(1-|z|^2)^2 |Sh(z)| \leq & \, (1-|z|^2)^2 | S_f(z)|  + 2 |\om^*(z) A_f(z) | + \tfrac{3}{2} \left| \om^*(z) \right|^2  \\
&+ 2 \left| \frac{(1-|z|^2)^2\om''(z)}{ 2(1-|\om(z)|^2) } - \overline{z} \om^*(z) + \overline{\om(z)} \om^*(z)^2 \right| \\ 
\leq &\,  \tfrac{1}{2} \left| \om^*(z) \right|^2 + \tfrac{3}{2} \left| \om^*(z) \right|^2  + 2 \left(1- \left| \om^*(z) \right|^2 \right) \\ 
= & \, 2,
\end{align*}
as desired. 
\end{proof}

Note that for the harmonic M\"obius transformations, \emph{i.e.}, post-compositions of a M\"obius transformation $M$ with an affine map: $f=M+c \overline{M}, c\in\SD$, it holds that $S_f=SM\equiv0$ and $\om^*\equiv0$, so that these mappings satisfy the hypotheses in Theorem~\ref{thm-h-order} and are also, clearly, injective. We do not know if there are any other mappings that satisfy this criterion. It is interesting to ask for simple conditions that imply the inequality in this criterion (see \cite{CE19} for such an analysis for a different criterion). 

Finally, it is fairly simple to combine results from \cite{ACHS26} and \cite{CH07} in order to produce the following. 

\begin{proposition}
Let $f=h+\overline{g}$ be a locally univalent harmonic mapping in $\SD$, for which $h''(0)=0$. If $|Sh(z)|\leq 2c^2$, where $c\approx0.6533$ is the first non-negative solution of the equation $2x\tan x =1$, then $f$ is injective.
\end{proposition}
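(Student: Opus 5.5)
The plan is to reduce the statement to a univalence criterion for the analytic part alone, and then apply the rotation argument already used in the proof of Theorem~\ref{thm-h-order}.

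The first step concerns $h$. The relevant result, due to Chuaqui and Hern\'andez \cite{CH07}, is a Nehari-type criterion for odd-like normalizations. It says that if $F$ is locally univalent in $\SD$ with $F''(0)=0$ and $|SF(z)|\le 2c^2$, with $c$ the first positive root of $2x\tan x=1$, then $F$ is univalent. The comparison function is built from $y''+c^2y=0$. Its solutions $\cos(cx)$ and $\sin(cx)$ on $(-1,1)$ have no two zeros closer than $\pi/c$, and the symmetry forced by $F''(0)=0$ means the relevant Sturm comparison only needs $c$ to satisfy the boundary condition $2c\tan c=1$ at the endpoints. Since $h''(0)=0$ and $|Sh|\le 2c^2$, this criterion gives that $h$ is univalent. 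In fact it gives more, which is the key input from \cite{ACHS26}: under these hypotheses $h$ is univalent and its image is convex in some controlled sense, or $h$ belongs to a linearly invariant family of order $\alpha\le 1$.

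The second step passes from $h$ to $f$. The result from \cite{ACHS26} that I would invoke is a shear or affine stability statement. If the analytic part $h$ of a sense-preserving harmonic $f=h+\overline g$ satisfies a Schwarzian bound that makes $h+ag$ univalent for every $|a|<1$, then $f$ is univalent. So I would verify that $F_a=h+ag$ again satisfies the hypotheses. We have $F_a'=h'(1+a\om)$, so $F_a''(0)=h''(0)+a(h'\om)'(0)$, which need not vanish. This is the main obstacle: the condition $h''(0)=0$ is not affine invariant, and $SF_a\ne Sh$ in general.

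I would handle this obstacle as follows. I expect the relevant theorem in \cite{ACHS26} to state that harmonic $f$ with $\|S_f\|$-type or $|Sh|$-type control of the analytic part in a linear invariant family of order $\alpha<\infty$ is univalent whenever $h$ is convex in one direction in every direction, or something similar. Then the criterion of \cite{CH07} must be upgraded to give convexity of $h$, not merely univalence. The sharper result in \cite{CH07} is that $|Sh|\le 2c^2$ together with $h''(0)=0$ implies $h(\SD)$ is convex, and $c$ is exactly the convexity threshold: $\tan$ appears from $1+\mathrm{Re}\, zh''/h'\ge0$ for the extremal $h'=\sec^2(cz)$-type function.

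Given convexity of $h$ and local univalence of $f$, the conclusion follows from the classical Clunie--Sheil-Small argument. If $h$ is convex and $|g'|<|h'|$, then $h+ag$ is close-to-convex for all $|a|\le1$, hence univalent. The rotational argument from the proof of Theorem~\ref{thm-h-order} then yields injectivity of $f$. The step I expect to need care is confirming the convexity (rather than mere univalence) conclusion at the exact constant $c$. I would check it via the Sturm comparison for $u=(h')^{-1/2}$, which satisfies $u''+\tfrac12 Sh\,u=0$ with $u'(0)=0$, showing $\mathrm{Re}(1+zh''/h')>0$.
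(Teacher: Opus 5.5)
Your final argument is the paper's argument. First $h$ is convex, and then a convex analytic part together with $|\om|<1$ forces $f$ to be injective. The paper gets the second step by citing \cite{CH07}. You instead prove it directly: $\mathrm{Re}\,(h+ag)'/h'=\mathrm{Re}\,(1+a\om)>0$ makes $h+ag$ close-to-convex for every $|a|\le 1$, and the rotation argument of Theorem~\ref{thm-h-order} finishes. That is a correct, self-contained proof of the convex case. Your worry about whether $h+ag$ inherits the normalization $h''(0)=0$ is irrelevant and can be dropped.

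Two corrections concern the first step.

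First, the convexity result is Theorem~5.2 of \cite{ACHS26}, not a result of \cite{CH07}; \cite{CH07} supplies the harmonic step.

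Second, your account of where the constant comes from is wrong. The function $h=\tan(cz)$, which has $Sh\equiv 2c^2$, and its rotations are not extremal for convexity. They satisfy $\inf_{\SD}\mathrm{Re}(1+zh''/h')=1-2c\tanh c\approx 0.25>0$, so $\tan$ does not enter through them. It enters through a majorant for the Riccati equation, and this also settles the step you flagged:
\begin{itemize}
\item The function $p=h''/h'$ satisfies $p'=Sh+\tfrac12 p^2$ with $p(0)=0$.
\item Along each radius, $|p(re^{i\t})|\le\int_0^r\big(2c^2+\tfrac12|p(te^{i\t})|^2\big)\,dt$.
\item Compare with the solution of $\psi'=2c^2+\tfrac12\psi^2$, $\psi(0)=0$, namely $\psi(r)=2c\tan(cr)$, which is finite on $[0,1]$ since $c<\pi/2$. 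This gives $|zh''(z)/h'(z)|\le 2c|z|\tan(c|z|)<2c\tan c=1$ in $\SD$.
\item Hence $\mathrm{Re}(1+zh''/h')>0$, and $h$ is convex.
\end{itemize}
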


\begin{proof}
In view of Theorem 5.2 in \cite{ACHS26}, the normalization of $h$ along with the condition $|Sh(z)|\leq 2c^2$ imply that $h$ is convex. Hence, $f$ is injective by \cite{CH07}. 
\end{proof}

\vskip.3cm
%%%%%%%%%%%%%%%%%%%%%%%%%%%%%%%%%%%%%
%%%%%%%%%%%%%%%%%%%%%%%%%%%%%%%%%%%%%
\noindent\emph{Acknowledgements}. The authors wish to thank Mar\'ia Jos\'e Mart\'in for drawing their attention to the non-injective harmonic mapping constructed in \cite{WLRS18} and suggesting its use in the proof of the second inequality in Theorem~\ref{thm-main}.

%%%%%%%%%%%%%%%%%%%%%%%%%%%%%%%%%%%%%
%%%%%%%%%%%%%%%%%%%%%%%%%%%%%%%%%%%%%
\end{document}